\colorlet{inlinkcolor}{blue!10!black}
\colorlet{exlinkcolor}{red!40!black}
\colorlet{reviewcolor}{black!50}
\newtheorem{algo}{Algorithm}
\DeclareMathOperator*{\argmin}{arg\,min}
\DeclareMathOperator{\dist}{dist}
\DeclareMathOperator*{\dom}{dom}
\def\ds{\displaystyle}
\def\cH{{\mathcal H}}
\def\bR{{\mathds R}}
\def\cK{{\mathcal K}}
\def\cX{{\mathcal X}}
\def\cL{{\mathcal L}}
\def\({\begin{equation}}
\def\){\end{equation}}
\def\keywords#1{\par
	\vspace*{8pt}
	{ {\leftskip18pt\rightskip\leftskip
	\noindent{\it Keywords}\/:\ #1\par}}\par}
	\newtheorem{definition}{Definition}
\newtheorem{assumption}{Assumption}
\newtheorem{theorem}{Theorem}
\newtheorem{remark}{Remark}
\numberwithin{equation}{section}
\numberwithin{lemma}{section}
\numberwithin{definition}{section}
\numberwithin{assumption}{section}
\numberwithin{theorem}{section}
\numberwithin{proposition}{section}
\numberwithin{corollary}{section}
\numberwithin{remark}{section}
\numberwithin{equation}{section}
\title{\bf\Large On the Linear and Asymptotically Superlinear
Convergence Rates of the Augmented Lagrangian Method
with a Practical Relative Error Criterion}
\author{Xin-Yuan Zhao\thanks{College of Applied Sciences, Beijing University of Technology, Beijing 100022, P. R. China (\url{xyzhao@bjut.edu.cn}).The research of this author was supported by the National Natural Science Foundation of China (11871002) and the General Program of Science and Technology of Beijing Municipal Education Commission.}
\qquad
Liang Chen\thanks{School of Mathematics, Hunan University, Changsha, P.R. China (\url{chl@hnu.edu.cn}). The research of this author was supported by the National Natural Science Foundation of China (11801158, 11871205), the Hunan Provincial Natural Science Foundation of China (2019JJ50040), and the Fundamental Research Funds for the Central Universities in China.}}
\date{\today}%
\begin{document}
\maketitle

\begin{abstract}
In this paper, we conduct a convergence rate analysis of the augmented Lagrangian method with a practical relative error criterion designed in Eckstein and Silva [Math. Program., 141, 319--348 (2013)] for convex nonlinear programming problems.
We show that under a mild local error bound condition, this method admits locally a Q-linear rate of convergence.
More importantly, we show that the modulus of the convergence rate is inversely proportional to the penalty parameter. That is, an asymptotically superlinear convergence is obtained if the penalty parameter used in the algorithm is increasing to infinity, or an arbitrarily Q-linear rate of convergence can be guaranteed if the penalty parameter is fixed but it is sufficiently large.
Besides, as a byproduct, the convergence, as well as the convergence rate, of the distance from the primal sequence to the solution set of the problem is obtained.
\end{abstract}

\keywords{Augmented Lagrangian method; Relative error criterion; Convergence rate.}

\section{Introduction}

The {\it fast} (asymptotically superlinear or arbitrarily Q-linear) local convergence rate is perhaps computationally the most crucial property of the augmented Lagrangian method.
This topic has long been discussed ever since the birth of the augmented Lagrangian method in \cite{powell}, and proceeded thereafter in \cite{rockafellar}, \cite{tre73}, \cite{ber76}, \cite{conn}, \cite{ito}, \cite{cont93}, \cite{zhang}, \cite{sun07}, \cite{chenjota} and \cite{cui}, for various problem settings.
Generally speaking, it has been observed that if the augmented Lagrangian method achieves a Q-linear convergence rate, the rate should be inversely proportional to the penalty parameter in the algorithm, i.e., the corresponding modulus should tend to zero as the penalty parameter increases to infinity.
As a result, both theoretically and practically, large (but not too large to influence the numerical stability) penalty parameters are desirable in implementing the augmented Lagrangian method.
In fact, the fast local convergence of the augmented has been exploited in many efficient solvers for large-scale convex optimization problems, e.g., \cite{zhaoxy}, \cite{yanglq}, \cite{chench}, \cite{lixd}, \cite{lixd2}, \cite{lixd3}, \cite{zhangyj} and \cite{liu2019}, to name a few.

The main objective of this paper is to investigate whether such a fast local Q-linear convergence is still preserved for the inexact augmented Lagrangian method with a practical relative error criterion proposed by \cite{eckstein13}.
This criterion is of some practical interests since it is checkable whenever the gradient (or the subgradient) of the augmented Lagrangian function is computable.
Moreover, this criterion is relative in the sense that only {\it one} tolerance parameter, instead of a summable sequence of nonnegative real numbers, such as that in \cite{rockafellar}, is used to control the error.
As was reported in \cite{eckstein13}, this approximation criterion improves the augmented Lagrangian method and therefore has significant practical value, but the corresponding algorithm can not be interpreted, at least currently, as an application of the proximal point algorithm (PPA) in \cite{rockafellar76} or its inexact variants. 
As a result, the convergence analysis in \cite{eckstein13} is not based on the convergence properties of the PPAs, which is
very different from the convergence analysis for the augmented Lagrangian method in \cite{rockafellar76}, where the convergence properties, including the fast Q-linear convergence rate, are heavily dependent on those of PPAs.
Consequently, the convergence properties of the augmented Lagrangian method with the relative error criterion in \cite{eckstein13} can not be obtained by directly applying some known results.
Therefore, it is of great interest to know if the algorithm in \cite{eckstein13} also admits a {\em fast} (local) convergence and if the modulus is inversely proportional to the penalty parameter.

The classic augmented Lagrangian method, also known as the method of multipliers, was initiated by \cite{hestenes69} and \cite{powell} for solving equality constrained nonlinear programming problems.
For general nonlinear programming problems, one may refer to the monograph of \cite{berbook} and the references therein, where the details for various aspects of the augmented Lagrangian methods were systematically provided.
In the context of convex programming, as was illustrated in \cite{rockafellar}, the classic augmented Lagrangian method can be viewed as a PPA applied to the dual of the given problem.
Moreover, in the seminal papers \cite{rockafellar} and \cite{rockafellar76}, the author has studied both the convergence and the {\em fast} local convergence rate of the augmented Lagrangian method, even if in the subproblems are approximately solved with a few summable sequences of nonnegative real numbers controlling the errors.
In fact, the convergence rate in \cite{rockafellar} for the augmented Lagrangian method is inherited from the convergence rate of the inexact PPA established in \cite{rockafellar76}, where the errors are also controlled by a summable sequence of nonnegative real numbers.

Along a different line, inexact PPAs with relative error criteria, in which summable sequences of real numbers are no longer necessary, also have been well studied in \cite{solodov99,solodov992,solodov00}.
Moreover, the corresponding local Q-linear convergence rate was also available in \cite{solodov992}.
There are many concrete instances of applications of these inexact PPAs, but direct applications of them to augmented Lagrangian methods would bring conditions that are not practically verifiable.
Fortunately, in \cite{eckstein13}, the authors developed a practical relative error criterion for approximately minimizing the subproblems in the augmented Lagrangian method.
This criterion is precisely implementable in the sense that all the information needed for checking it can be obtained directly at every candidate approximate solution to the subproblems, instead of using certain values such as the infimum of the objective functions to the subproblems in \cite{rockafellar}, which are generally not available\footnote{It wa until recently that practical surrogates of such criteria has been developed for a class of convex composite conic programming problems in \cite{cui}.}.
In \cite{eckstein13}, an informative discussion on the design of this relative error criterion for the augmented Lagrangian method was provided, and the comparisons of this criterion to those of \cite{solodov99,solodov992,solodov00} for inexact PPAs are also elaborated.

To be consistent with, and even more general\footnote{In \cite{eckstein13}, $\cX\equiv\bR^n$, and the objective function $f$ is assumed to be either a continuous differentiable convex function, or the sum of a continuous differentiable convex function and the indicator function of a closed convex box in $\bR^n$. However, the algorithm in \cite{eckstein13} and all the results established in \cite{eckstein13} and \cite{alves} are valid for problem \eqref{prob}, with the corresponding gradients and normal cones being replaced by subgradients.} than, that of \cite{eckstein13}, the problem setting of this paper is as follows
\(
\label{prob}
\begin{array}{cl}
\ds\min_{x\in\cX} &f(x)\\[2mm]
\mbox{s.t.}& h(x)=0, \\[2mm]
&g(x) \le 0,
\end{array}
\)
where $\cX$ is a finite dimensional real Hilbert space endowed with an inner product $\langle\cdot,\cdot\rangle$ and $\|\cdot\|$ is the corresponding norm,
$f:\cX\to(-\infty,+\infty]$ is a closed proper convex function, $h:\cX\to\bR^{m_1}$ is an affine mapping,
and $g:\cX\to\bR^{m_2}$ is a nonlinear mapping, i.e., $g(x)=(g_1(x);\ldots; g_{m_2}(x))$, with each $g_i:\cX\to(-\infty,\infty)$, $i=1,\ldots,m_2$ being a continuously differentiable convex function.

In fact, the original convergence results and the corresponding convergence analysis are not sufficient for analyzing the convergence rate of the algorithm in \cite{eckstein13}. However, fortunately, the convergence analysis has been further improved in \cite{alves}, where a novel technique for treating F\'ejer monotone sequence in product spaces was developed.
This partially paves the way for studying the convergence rate of the algorithm in \cite{eckstein13}.

Motivated by the above expositions and the fact that allowing the subproblems being solved approximately would further contribute to the efficiency of the augmented Lagrangian method,
we are interested in further exploring the convergence properties of the augmented Lagrangian method with the relative error criterion developed in \cite{eckstein13} for solving problem \eqref{prob}.
In this paper, we will show that, under a mild local error bound condition, this algorithm also admits a {\em fast} Q-linear local convergence in the sense that the convergence rate of the dual sequence is Q-linear and the modulus of this rate tends to zero if the penalty parameter increases to infinity.
Besides, we show that such a local error bound condition is also sufficient to guarantee the convergence of the distance from the primal sequence, generated by the algorithm, to the optimal solution set of problem \eqref{prob}.
Here, we should emphasize that neither \cite{eckstein13} nor \cite{alves} has established the convergence of the primal sequence.

This remaining parts of this paper are organized as follows.
In Sect. \ref{sec:pre}, we provide the notation and preliminaries that will be used throughout this paper. In Sect. \ref{sec:algo}, we present the inexact augmented Lagrangian method for solving problem \eqref{prob} with the practical relative error criterion developed in \cite{eckstein13}. Also, we will summarize the corresponding convergence properties from \cite{eckstein13} and \cite{alves}.
Here, we also present some useful equalities and inequalities for further use.
In Sect. \ref{sec:conv}, we introduce a local error bound condition and establish the fast local convergence of the algorithm presented in Sect. \ref{sec:algo}, as well as the global convergence of the primal sequence.
We conclude this paper in Sect. \ref{sec:conclusion}.

\subsection*{Notation}
Let $\cH$ be a finite dimensional real Hilbert space endowed with the inner product $\langle\cdot,\cdot\rangle$.
We use $\|\cdot\|$ to denote the norm induced by this inner product.
For any given $z\in\cH$, we use ${\mathds B}_\epsilon(z)$ to denote the closed ball centring at $z$ with the radius $\epsilon\ge 0$, i.e.,
$$
{\mathds B}_\epsilon(z):=\{z'\in\cH\mid \|z-z'\|\le \epsilon\}.
$$
Let $C\subset\cH$ be a nonempty closed convex set. The projection of a vector $z\in\cH$ onto the set $C$ is defined by
\[
\Pi_C(z):=\argmin_{z'\in C}\left\{\frac{1}{2}\|z'-z\|^2\right\},
\]
while the distance from $z\in\cH$ to the set $C$ is defined by
\[
\dist (z,C):=\|z-\Pi_C(z)\|.
\]

Let $\theta:\cH\to(-\infty,+\infty]$ be an arbitrary closed proper convex function, we use $\dom\, \theta$ to denote its effective domain, i.e.,
$$
\dom\, \theta:=\{z\in\cH\mid \theta(z)<+\infty\}$$
and $\partial\theta$ to denote its subdifferential mapping, i.e.,
$$
\partial \theta(z):=\{
\gamma \in\cH \mid \theta(z')-\theta(z)\ge\langle \gamma, z'-z\rangle,\ \forall z'\in\cH
\},\quad\forall z\in\cH.
$$
Moreover, if $\theta$ is continuously differentiable at $z\in\cH$, one has $\partial\theta(z)=\{\nabla\theta(z)\}$, where $\nabla\theta(z)$ is the gradient of $\theta$ at $z$.

If $\cH\equiv\bR^l$, i.e., the $l$-dimensional real vector space, we use the dot product as the inner product, i.e.,  $\langle z,z'\rangle:=z^Tz'$, so that $\|\cdot\|$ is the conventional $\ell_2$-norm. In this case we define
$$\max\{z,z'\}:=\{\bar z\mid \bar z_i=\max\{z_i,z'_i\}, i=1\ldots,l\},\quad\forall z,z'\in\cH, 
$$ i.e., the component-wise maximum between $z$ and $z'$. The component-wise minimum is also defined in the same fashion.

\section{Preliminaries}
\label{sec:pre}
Generally, the notation and definitions used in this paper are the same as those used in \cite{eckstein13} and \cite{alves}. In fact, they are consistent with those in \cite{rocbook,rocbookcg}.

The Lagrangian function $\cL:\cX\times\bR^{m_1}\times\bR^{m_2}\to[-\infty,\infty]$ of problem \eqref{prob} is defined by
\[
\cL(x;\lambda,\mu):=\left\{
\begin{array}{ll}
f(x)+\langle \lambda, h(x)\rangle+\langle \mu, g(x)\rangle,\quad &\mbox{if }\mu\ge 0,\\[2mm]
-\infty, & \mbox{otherwise}.
\end{array}
\right.
\]
Then, the dual of problem \eqref{prob} is given by
\(
\label{dual}
\max_{\lambda\in\bR^{m_1},\mu\in\bR^{m_2}}\left\{ d(\lambda,\mu):=\inf_{x\in\cX}\cL(x;\lambda,\mu)\right\},
\)
where $d(\cdot)$ is called the dual objective function.
Hence, we call $x\in\cX$ as the primal variable and call $(\lambda,\mu)\in\bR^{m_1+m_2}$ as the dual variable.
Note that the Lagrangian function $\cL$ is convex in $x$ and concave in $(\lambda,\mu)$.
Moreover, for this Lagrangian function,
the subdifferential mapping\footnote{For concave-convex functions, c.f. \cite[p. 374]{rockafellar}.}
$\partial\cL:\cX\times\bR^{m_1}\times\bR^{m_2}\to\cX\times\bR^{m_1}\times\bR^{m_2}$ is defined as follows
\begin{equation*}
\begin{array}{lll}
&(y;u,v)\in\partial \cL(x,\lambda,\mu)
\\[2mm]
&\Leftrightarrow
\left\{
\begin{array}{ll}
\cL(x';\lambda,\mu)\ge \cL(x,\lambda,\mu)+\langle y, x'-x\rangle, &\forall x'\in\cX,
\\[2mm]
\cL(x;\lambda',\mu')\le \cL(x,\lambda,\mu)-\langle u,\lambda'-\lambda\rangle-\langle v,\mu'-\mu\rangle,\quad &\forall (\lambda',\mu')\in\bR^{m_1+m_2}.
\end{array}
\right.
\end{array}
\end{equation*}
Based on the above definition, it is easy to verify that the subdifferential mapping
$\partial\cL$ is a maximal monotone operator.
Moreover, if $(x^*,\lambda^*,\mu^*)\in\cX\times\bR^{m_1}\times\bR^{m_2}$ satisfies
$0\in\partial\cL(x^*,\lambda^*,\mu^*)$, it holds that $x^*$ is an optimal solution to problem \eqref{prob} and $(\lambda^*,\mu^*)$ is an optimal solution to problem \eqref{dual}.
In this case, $(x^*,\lambda^*,\mu^*)$ is called as a saddle point of the Lagrangian function $\cL$, and it holds that $f(x^*)=d(\lambda^*,\mu^*)$.

Since $g$ and $h$ are continuously differentiable, we define
$$
\nabla h(x):=\big(
\nabla h_1(x),\cdots,
\nabla h_{m_1}(x)
\big)
\quad
\mbox{and}
\quad
\nabla g(x):=\big(
\nabla g_1(x),\cdots,\nabla g_{m_2}(x)
\big).
$$
Then, the Karush-Kuhn-Tucker (KKT) system of problem \eqref{prob} is given by
\(
\label{kkt}\left\{
\begin{array}{l}
0\in\partial_x \cL(x,\lambda,\mu):=\partial f(x)+\nabla h(x) \lambda+ \nabla g(x) \mu,\\[2mm]
h(x)=0,\\[2mm]
g(x)\le 0,\ \mu\ge0,\ \langle\mu,g(x)\rangle=0.
\end{array}
\right.
\)
If the solution set to the KKT system \eqref{kkt} is nonempty, from \cite[Theorem 30.4 \& Corollary 30.5.1]{rocbook} one knows that a vector
$(x^*, \lambda^*,\mu^*)\in\cX\times\bR^{m_1}\times\bR^{m_2}$ is a solution to the KKT system \eqref{kkt} if and only if $x^*$ is an optimal solution to problem \eqref{prob} and $(\lambda^*, \mu^*)$ is an optimal solution to problem \eqref{dual}.
Moreover, the solution set to the KKT system \eqref{kkt} can be written as $X^*\times P^*$ with $X^*$ being the solution set to problem \eqref{prob} and $P^*$ being the solution set to problem \eqref{dual}.
In this case, the optimal values of problem \eqref{prob} and problem \eqref{dual}
are equal, and the solution set to the KKT system \eqref{kkt} is exactly the set of saddle points to the Lagrangian function $\cL$.

\section{An Inexact Augmented Larangian Method}

\label{sec:algo}
In this section, we present the inexact augmented Lagrangian method with the practical relative error criterion of \cite{eckstein13}, and summarize its convergence properties from \cite{eckstein13} and \cite{alves}. Some equalities and inequalities, which are useful to study its convergence rate, are also prepared in this section.

We dfine the closed convex cone
$$\cK=\bR^{m_2}_+:=\{v\in\bR^{m_2}:v\ge 0\}.
$$
Let $c>0$ be a given real number. 
The augmented Lagrangian function of problem \eqref{prob} is defined (with $c$ being the penalty parameter) by
\[
\begin{array}{l}
\ds
\cL_c(x,\lambda,\mu):
\ds =f(x)+\langle\lambda, h(x)\rangle+\frac{c}{2}\|h(x)\|^2+\frac{1}{2c}\big(\|\Pi_{\cal K}(\mu+cg(x))\|^2-\|\mu\|^2\big)\\[4mm]
\displaystyle =f(x)+\sum_{i=1}^{m_1}\big(\lambda_ih_i(x)+\frac{c}{2}(h_i(x))^2\big)
+\frac{1}{2c}\sum_{i=1}^{m_2}\Big(\big(\max\{0,\mu_i+cg_i(x)\}\big)^2-\mu_i^2\Big),\quad
\\[4mm]
\hfill \forall x\in\cX, \lambda\in\bR^{m_1}, \mu\in\bR^{m_2}.
\end{array}
\]

When applied to solving problem \eqref{prob}, the augmented Lagrangian method with the practical relative error criterion proposed in \cite[(11)-(15)]{eckstein13} can be described as the following Algorithm.

\begin{center}
\fbox{\begin{minipage}{.97\textwidth}
\begin{algo}
\label{alg:alm}
{\bf An inexact augmented Lagrangian method with a practical relative error criterion for solving problem \eqref{prob}.}
\end{algo}
Let $\sigma\in[0,1)$ and let $\{c_k\}$ be a sequence of positive real numbers such that $\inf_{k\ge 1}\{c_k\}>0$.
Choose $\lambda^0\in\bR^{m_1}$, $\mu^0\in\bR^{m_2}_+$ and $w^0\in\cX$. For $k=1, 2, \ldots$,
\begin{enumerate}
\item[\bf 1.] find $x^{k}\in\cX$ and $y^k\in\cX$ such that
\(
\label{conditionsub}
y^{k}\in\partial_x \cL_{c_k}(x^{k},\lambda^{k-1},\mu^{k-1}),
\)
and
\begin{equation}
\label{condition}
\frac{2}{c_k}
\Big|\langle w^{k-1}-x^{k},y^{k}\rangle\Big|
+
\|y^k\|^2\le \sigma
\left(
\|h(x^k)\|^2
+\left\|\min\left\{\frac{1}{c_k}\mu^{k-1},-g(x^k)\right\}\right\|^2
\right);
\end{equation}
\item[\bf 2.] set
\begin{eqnarray*}
&&\lambda^k:=\lambda^{k-1}+c_kh(x^k),
\\[2mm]
&&\mu^k:=\max\{0,\mu^{k-1}+c_kg(x^k)\},
\\[2mm]
&&w^k:=w^{k-1}-c_k y^k.
\end{eqnarray*}
\end{enumerate}
\end{minipage}}
\end{center}

\begin{remark}
In the above algorithm, each $x^k$ is an approximate solution to the corresponding subproblem with $y^k$ being a subgradient of the corresponding objective function at $x^k$, i.e., $x^k$ approximately solves the problem
$$
\min_{x}\cL_{c_k}(x; \lambda^{k-1},\mu^{k-1}).
$$
If this subproblem admits a solution, say $\tilde x^k$, one can let $y^k=0$ so that \eqref{condition} is satisfied.
Therefore, one can always find the sequence $\{(x^k,y^k)\}$ via Algorithm \ref{alg:alm} if the subproblems are well-defined.

\end{remark}

For the convenience of further discussions, we denote $m:=m_1+m_2$ and define the sequences $\{p^k\}$ and $\{u^k\}$ in $\bR^m$ via
\(
\label{def:pu}
p^k:=(\lambda^k,\mu^k)
\quad\mbox{and}\quad
u^k:=\frac{1}{c_k}(p^{k-1}-p^k).
\)
The following theorem is an immediate extension of \cite[Proposition 1]{eckstein13}.
\begin{theorem}
\label{prop:main}
Assume that the solution set to the KKT system \eqref{kkt} of problem \eqref{prob} is non-empty.
Suppose that the infinite sequences generated by Algorithm \ref{alg:alm} are well-defined. Then,
\begin{enumerate}
\item[(a)] The sequences $\{p^k\}$ and $\{w^k\}$ are bounded;
\item[(b)] $u^k\to 0$ and $y^k\to0$ as $k\to\infty$;
\item[(c)] Any accumulation point of the sequence $\{x^k\}$ is a solutions to problem \eqref{prob}, and any accumulation point of $\{p^k\}$ is a solution to the dual problem \eqref{dual}.

\end{enumerate}
\end{theorem}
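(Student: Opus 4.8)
The plan is to uncover the maximal-monotone structure hidden behind the multiplier updates and to combine it with the relative error criterion \eqref{condition} through a Fej\'er-type argument carried out in the \emph{product} space $\bR^m\times\cX$. First I would recast the algorithmic residuals $(y^k,u^k)$ as an inclusion for the saddle operator $\partial\cL$. Using the componentwise identity $a-\max\{0,a+b\}=\min\{a,-b\}$, the second block of $u^k$ in \eqref{def:pu} equals $\min\{\tfrac{1}{c_k}\mu^{k-1},-g(x^k)\}$ and the first block equals $-h(x^k)$; in particular the right-hand side of \eqref{condition} is exactly $\sigma\|u^k\|^2$. Moreover, since $\lambda^{k-1}+c_kh(x^k)=\lambda^k$ and $\Pi_{\cK}(\mu^{k-1}+c_kg(x^k))=\mu^k$, the stationarity relation \eqref{conditionsub} reads $y^k\in\partial_x\cL(x^k,p^k)$, and a short complementarity check (splitting coordinates by the sign of $\mu_i^{k-1}+c_kg_i(x^k)$) shows that $-u^k$ is a supergradient of the concave map $\cL(x^k,\cdot)$ at $p^k$. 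Hence, in the concave--convex sense of Section \ref{sec:pre},
\[
(y^k,u^k)\in\partial\cL(x^k,p^k),\qquad u^k=\tfrac{1}{c_k}(p^{k-1}-p^k).
\]

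With this inclusion I would fix any KKT point $(x^*,p^*)$, so that $(0,0)\in\partial\cL(x^*,p^*)$, and invoke monotonicity of $\partial\cL$ to get $\langle y^k,x^k-x^*\rangle+\langle u^k,p^k-p^*\rangle\ge 0$. Substituting $\langle u^k,p^k-p^*\rangle=\tfrac{1}{c_k}\langle p^{k-1}-p^k,p^k-p^*\rangle$ and expanding with the three-point identity yields
\[
\|p^k-p^*\|^2-\|p^{k-1}-p^*\|^2\le 2c_k\langle y^k,x^k-x^*\rangle-c_k^2\|u^k\|^2.
\]
The decisive step --- the product-space device --- is to add to this the elementary expansion of $\|w^k-x^*\|^2$ coming from $w^k=w^{k-1}-c_ky^k$. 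The two inner products then collapse to the single cross term $2c_k\langle y^k,x^k-w^{k-1}\rangle$, and multiplying \eqref{condition} by $c_k^2$ gives $2c_k\langle y^k,x^k-w^{k-1}\rangle+c_k^2\|y^k\|^2\le\sigma c_k^2\|u^k\|^2$. This produces the Fej\'er inequality
\[
\big(\|p^k-p^*\|^2+\|w^k-x^*\|^2\big)\le\big(\|p^{k-1}-p^*\|^2+\|w^{k-1}-x^*\|^2\big)-(1-\sigma)c_k^2\|u^k\|^2,
\]
which, since $\sigma\in[0,1)$, is the engine for everything that follows.

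The conclusions then come quickly. Monotonic decrease of the left-hand side gives part (a): $\{p^k\}$ and $\{w^k\}$ are bounded. Telescoping over $k$ and using $\inf_k c_k>0$ gives $\sum_k\|u^k\|^2<\infty$, hence $u^k\to0$; then \eqref{condition} forces $\|y^k\|^2\le\sigma\|u^k\|^2\to0$, which is part (b). For part (c) I would pass to the limit in the inclusion $(y^k,u^k)\in\partial\cL(x^k,p^k)$ using the closed graph (demiclosedness) of the maximal monotone operator $\partial\cL$: along any subsequence on which $(x^k,p^k)$ converges, the limit satisfies $0\in\partial\cL$, i.e.\ solves \eqref{kkt}, so its primal part lies in $X^*$ and its dual part in $P^*$. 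Boundedness from (a) lets one attach a convergent dual subsequence to any primal accumulation point, giving the primal assertion directly; the dual assertion is obtained by the same limiting argument once a primal limit is secured along the chosen subsequence.

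I expect the main obstacle to be not the estimates but \emph{discovering the correct Lyapunov function}. The auxiliary sequence $w^k$ is precisely what renders the relative error criterion \eqref{condition} usable: only in the product space $\bR^m\times\cX$, with $w^k$ measured against the primal solution $x^*$, do the error quantities $\langle y^k,x^k-w^{k-1}\rangle$ and $\|y^k\|^2$ align exactly with what \eqref{condition} controls. The two places where I would be most careful are the verification of the dual-block supergradient inclusion (the complementarity bookkeeping establishing $(y^k,u^k)\in\partial\cL(x^k,p^k)$), and the legitimacy of the accumulation-point argument in (c) when the primal iterates are known to be bounded only along subsequences rather than globally.
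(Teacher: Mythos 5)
Your reconstruction of the Fej\'er mechanism is essentially the correct one, and it coincides with the argument the paper implicitly relies on: the paper offers no proof of this theorem, deferring entirely to Proposition 1 of Eckstein--Silva (2013) and Alves--Svaiter (2016), and the engine there is exactly your product-space Lyapunov function $\|p^k-p^*\|^2+\|w^k-x^*\|^2$, whose one-step decrease by $(1-\sigma)c_k^2\|u^k\|^2$ follows from monotonicity of $\partial\cL$ plus the criterion \eqref{condition} rescaled by $c_k^2$. Your identification of $u^k$ with $(-h(x^k),\min\{\mu^{k-1}/c_k,-g(x^k)\})$, the inclusion $(y^k,u^k)\in\partial\cL(x^k,p^k)$, and the deductions of (a), (b), and the \emph{primal} half of (c) (attach a convergent dual subsequence via boundedness, then use closedness of the graph of $\partial\cL$) are all sound.

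There is, however, a genuine gap in the \emph{dual} half of (c), precisely at the point you flag but do not resolve. To conclude $(0,0)\in\partial\cL(\hat x,\hat p)$ by graph-closedness along a subsequence $p^{k_j}\to\hat p$, you must first secure a convergent primal subsequence $x^{k_j}$, and this is not available: the paper explicitly remarks that $\{x^k\}$ ``is not guaranteed to be bounded.'' So the dual assertion cannot be obtained ``by the same limiting argument.'' The repair requires a different route that avoids primal accumulation points altogether: for each \emph{fixed} $x\in\dom f$, pass to the limit in the scalar subgradient inequality $\cL(x;p^{k_j})\ge\cL(x^{k_j};p^{k_j})+\langle y^{k_j},x-x^{k_j}\rangle$. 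The term $\langle y^{k_j},x\rangle$ vanishes since $y^k\to0$; the term $\langle y^{k_j},x^{k_j}\rangle=\langle y^{k_j},x^{k_j}-w^{k_j-1}\rangle+\langle y^{k_j},w^{k_j-1}\rangle$ vanishes because \eqref{condition} bounds the first summand by $\tfrac{\sigma}{2c_{k_j}}\|p^{k_j-1}-p^{k_j}\|^2\to0$ and $\{w^k\}$ is bounded; and a two-sided squeeze using the saddle inequalities at $(x^*,p^*)$ shows $\cL(x^{k_j};p^{k_j})\to f(x^*)$. Taking the infimum over $x$ then gives $d(\hat p)\ge f(x^*)$, whence $\hat p\in P^*$ by weak duality. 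Note that one cannot instead argue via $\liminf_k d(p^k)\ge f(x^*)$, since $d(p^k)$ may equal $-\infty$ for every $k$ even when $p^k\to\hat p\in P^*$.
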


As can be observed from Theorem \ref{prop:main}, the convergence results in \cite{eckstein13} show that the sequence $\{p^k\}$ is bounded and each accumulation point of this sequence is a solution to the dual problem \eqref{dual}.
To ensure the full convergence of the sequence $\{p^k\}$, in \cite[Proposition 2]{eckstein13}, the authors introduced the additional criterion to \eqref{condition} that
$$
c_k\|y^k\|\le\xi\|p^{k-1}-p^k\|^2,
$$
where $\xi\ge 0$ is a given constant. As was commented in \cite{eckstein13}, despite $\xi$ can be arbitrarily large, this additional criterion seems stringent. Fortunately, in \cite{alves},
the authors showed that it is not necessary to use this extra criterion to guarantee the convergence of the sequence $\{p^k\}$,
thanks to their insightful investigation of F\'ejer monotone sequences in product spaces.
The following theorem directly comes from \cite[Proposition 2]{alves}.
\begin{theorem}
\label{theo:main}
Suppose that all the assumptions and conditions in Proposition \ref{prop:main} hold.
Then, the whole sequences $\{p^k\}$ converges to a solution to problem \eqref{dual}.
\end{theorem}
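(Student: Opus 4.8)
The plan is to upgrade the accumulation-point statement of Theorem~\ref{prop:main}(c) to full convergence by a (quasi-)Fej\'er monotonicity argument, following the product-space device of \cite{alves}. Everything furnished by Theorem~\ref{prop:main} is at our disposal: $\{p^k\}$ is bounded, $u^k\to0$ and $y^k\to0$, and every accumulation point of $\{p^k\}$ lies in the dual solution set $P^*$. I would first isolate the abstract principle that drives the argument: if a bounded sequence $\{z^k\}$ satisfies, for each $z^*$ in a nonempty set $S$, that $\{\|z^k-z^*\|\}$ is eventually non-increasing up to a summable perturbation, and if every accumulation point of $\{z^k\}$ lies in $S$, then $\{z^k\}$ converges to a single point of $S$. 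Its proof is routine: the perturbed monotonicity makes $\{\|z^k-z^*\|\}$ convergent for each $z^*\in S$; boundedness yields an accumulation point $\bar z\in S$; taking $z^*=\bar z$ forces $\|z^k-\bar z\|$ to converge, and since a subsequence tends to $0$, so does the whole sequence. The theorem then reduces to verifying this hypothesis for the iterates of Algorithm~\ref{alg:alm}.

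The core step is establishing the monotonicity, and it is here that the absence of a PPA interpretation bites: $\{\|p^k-p^*\|\}$ need not be non-increasing, since \eqref{condition} controls the error only through the single parameter $\sigma$ rather than a summable tolerance. I would first record the clean identity underlying the criterion: by the update rules in Step~2 one has $u^k=(-h(x^k),\,\min\{c_k^{-1}\mu^{k-1},-g(x^k)\})$, so the right-hand side of \eqref{condition} is exactly $\|u^k\|^2$ and the criterion reads $2c_k^{-1}|\langle w^{k-1}-x^k,y^k\rangle|+\|y^k\|^2\le\sigma\|u^k\|^2$. Fixing a saddle point $(\bar x,\bar p)$ of $\cL$ (so $\bar p\in P^*$) and using the inclusion \eqref{conditionsub} together with the monotonicity of $\partial\cL$, one relates inner products of the increments $p^k-p^{k-1}=-c_ku^k$ and $w^k-w^{k-1}=-c_ky^k$ to the defect $\|p^k-\bar p\|^2-\|p^{k-1}-\bar p\|^2$. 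Instead of tracking $\|p^k-\bar p\|^2$ alone, one works in the product space $\bR^m\times\cX$ with a combined potential built from $p^k-\bar p$ and $w^k-\bar x$; the indefinite cross term $\langle w^{k-1}-x^k,y^k\rangle$ is absorbed into this potential, and the relative bound above dominates the surviving residuals by $\sigma\|u^k\|^2$, with the slack $1-\sigma>0$ supplying the decrease.

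With such a product-space Fej\'er inequality available, the conclusion follows from the classical Fej\'er argument: the combined sequence is bounded by Theorem~\ref{prop:main}(a), the $p$-accumulation points lie in $P^*$ by Theorem~\ref{prop:main}(c), and the monotonicity forces the distance to a once-chosen accumulation point to vanish along the whole sequence. In particular $\{p^k\}$ converges, and its limit is a solution of the dual problem \eqref{dual}, which is the claim.

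The step I expect to be the main obstacle is the very construction of this product-space Fej\'er structure. The plain dual sequence is not Fej\'er monotone, and the auxiliary sequence $\{w^k\}$ cannot simply be appended, since it is \emph{not} known to converge---its limiting behaviour is tied to the still-open convergence of the primal iterates. The delicate point in \cite[Proposition~2]{alves} is therefore to design a combined sequence and a target set in $\bR^m\times\cX$ for which genuine Fej\'er monotonicity (up to a summable error) holds once the cross terms $\langle w^{k-1}-x^k,y^k\rangle$ are neutralized using $\sigma<1$ and weights dictated by the penalty parameters $c_k$ (with $\inf_k c_k>0$), in such a way that $p$-convergence can be read off \emph{without} controlling $w^k$. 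Once this calibration is found, the monotonicity is forced and the remainder is bookkeeping resting on Theorem~\ref{prop:main}.
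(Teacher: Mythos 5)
The paper does not actually prove this theorem: it is imported verbatim from \cite[Proposition 2]{alves}, so the only thing to compare your attempt against is the argument of that reference. Your sketch correctly identifies its two ingredients: a Fej\'er-type inequality for the pair $(p^k,w^k)$ with respect to the product set $P^*\times X^*$ (this part is essentially already in \cite{eckstein13}: combining \eqref{conditionsub}, the monotonicity of $\partial\cL$ at a saddle point, and \eqref{condition} with $\sigma<1$ yields $\|p^k-p^*\|^2+\|w^k-x^*\|^2\le\|p^{k-1}-p^*\|^2+\|w^{k-1}-x^*\|^2$), and the observation that convergence of the $p$-component can then be extracted without any control on the accumulation points of $\{w^k\}$.

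However, as a proof the attempt has a genuine gap, and it sits exactly where you place the weight. The ``abstract principle'' you actually prove in your first paragraph is the standard single-sequence quasi-Fej\'er argument, and it is not applicable here: $\{p^k\}$ alone is not known to be (quasi-)Fej\'er monotone to $P^*$, and the pair $(p^k,w^k)$, while Fej\'er monotone to $P^*\times X^*$, is not known to have its accumulation points in $P^*\times X^*$, since nothing is known about the limiting behaviour of $\{w^k\}$. The lemma that is really needed --- if $(z^k,w^k)$ is Fej\'er monotone with respect to a nonempty product set $S_1\times S_2$ and every accumulation point of $\{z^k\}$ lies in $S_1$, then $\{z^k\}$ converges to a point of $S_1$ --- is only gestured at in your last paragraph, and it is not a consequence of the single-sequence version. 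Its proof is short but not routine bookkeeping: fixing $w^*\in S_2$ and two accumulation points $\bar z_1,\bar z_2\in S_1$ of $\{z^k\}$, the convergence of $\|z^k-\bar z_i\|^2+\|w^k-w^*\|^2$ for $i=1,2$ gives convergence of the difference $\|z^k-\bar z_1\|^2-\|z^k-\bar z_2\|^2$, hence of $\langle z^k,\bar z_1-\bar z_2\rangle$; evaluating this limit along the two subsequences forces $\|\bar z_1-\bar z_2\|^2=0$, so the accumulation point is unique and boundedness finishes the job. Supplying this lemma, together with an explicit derivation of the product-space inequality from \cite[eqs.\ (24) and (38)]{eckstein13}, would close the argument; without them the sketch identifies the right route but does not yet constitute a proof.
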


As can be seen from Proposition \ref{prop:main} and Theorem \ref{theo:main}, the global convergence of Algorithm \ref{alg:alm} has been well established in the sense that the sequence of the generated dual variables is globally convergent,
which is quite similar to the global convergence properties of the (inexact) augmented Lagrangian method established in \cite{rockafellar}.
In fact, Proposition \ref{prop:main}, together with Theorem \ref{theo:main}, lays the foundation for further investigating the convergence rate of Algorithm \ref{alg:alm} in the next section.

Finally, we should emphasize that, referring to the primal sequence $\{x^k\}$, the best possible result in both \cite{eckstein13} and \cite{alves} is that any accumulation point of this sequence is a solutions to problem \eqref{prob}.

\section{Convergence Rate Analysis}
\label{sec:conv}
This section establishes the local convergence rate of Algorithm \ref{alg:alm} for solving problem \eqref{prob}, under a mild error bound condition.
We first discuss this error bound condition and then given the main result of this paper.

\subsection{An error bound condition}
Let $T:\cH\to\cH$ be a maximal monotone mapping with $T^{-1}$ being its inverse mapping\footnote{One may refer to \cite[Chapter 12]{va} for more information about (maximal) monotone mappings.}.
For solving the general inclusion problem of finding $z\in\cH$ such that
\(
\label{inclu}
0\in T(z),
\)
the (inexact) PPA in \cite{rockafellar76} takes the following iteration scheme
\(
\label{algppa}
z^{k}\approx (I+c_k T)^{-1}(z^{k-1}),\quad k=1,2,\ldots,
\)
where $c_k>0$, and $z^0\in\cH$ is the given initial point.
In \cite{rockafellar76},
the convergence rate of this (inexact) PPA has been analyzed under a local error bound condition in which the solution to \eqref{inclu} should be a singleton. In \cite{luque}, the author extended the convergence rate analysis of \cite{rockafellar76} for the PPA to the case that the solution set of \eqref{inclu} is not necessarily a singleton.
The following definition introduces an error bound condition, which has been used, in the name of a growth condition of maximal monotone operators, in \cite{luque} for the convergence rate analysis of PPA.

\begin{definition}[\cite{robinson}]

The mapping $T^{-1}$ is called locally upper Lipschitz continuous at the origin if $T^{-1}(0)$ is nonempty and there exist constants $\epsilon>0$ and $\kappa>0$ such that
\(
\label{growth}
\dist(z,T^{-1}(0))\le\kappa\|\beta\|,\quad
\forall \beta\in{\mathds B}_{\epsilon}(0)\ \ \mbox{and}\ \ \forall z\in T^{-1}(\beta).
\)
\end{definition}

The augmented Lagrangian method and its inexact version in \cite{rockafellar} can be explained as an application of the PPA to a certain maximal monotone operator\footnote{In particular, for problem \eqref{prob} considered in this paper, this maximal monotone operator is given by $\partial d$  with $d$ being defined in \eqref{dual}. Here the subdifferential mapping is in the concave sense, c.f. \cite[pp. 307--308]{rocbook}.}.
Therefore, the convergence rate analysis in \cite{rockafellar} for the augmented Lagrangian method is heavily dependent on the convergence rate analysis in \cite{rockafellar76} for the PPA.
However, Algorithm \ref{alg:alm} can not be explained as a certain variant of the PPA, at least currently.
Therefore, its convergence rate should be studied via a totally different approach.
In this paper, to analyze the convergence rate of Algorithm \ref{alg:alm}, we make the following assumption, which is an error bound condition on problem \eqref{prob}.

\begin{assumption}
\label{ass:growth}
The mapping $(\partial\cL)^{-1}$ is locally upper Lipschitz continuous at the origin.
\end{assumption}
Before analyzing the local Q-linear convergence rate of Algorithm \ref{alg:alm}, we make the following comment to Assumption \ref{ass:growth}.
\begin{remark}
Assumption \ref{ass:growth} is not a very restrictive condition. On the one hand, it contains the case that $(\partial\cL)^{-1}$ is locally Lipschitz continuous at $0$, which was used extensively in {\rm \cite{rockafellar,rockafellar76}} for deriving the Q-linear convergence rate for PPA and augmented Lagrangian methods.
Note that even the stronger condition than Assumption \ref{ass:growth} that $(\partial\cL)^{-1}$ is locally Lipschitz continuous at $0$ can be satisfied, at least heuristically, for ``most'' convex optimization problems in the form of \eqref{prob} with $f$ being the sum of a twice continuously differentiable convex function plus an indicator function of a closed convex set and $g$ and $h$ being twice continuously differentiable \cite[p. 105, Remark]{rockafellar}.
One may also refer to {\rm\cite[Proposition 4]{rockafellar}} and the discussions after this proposition for more information. In addition, we mention that Assumption \ref{ass:growth} is more general in the sense that $(\partial\cL)^{-1}(0)$ is not necessarily to be a singleton.
\end{remark}

\subsection{The convergence rate}
Now we present the main result of this paper.
\begin{theorem}
\label{thm:main}
Suppose that the solution set $X^*\times P^*$ to the KKT system \eqref{kkt} of problem \eqref{prob} is nonempty and the infinite sequences generated by Algorithm \ref{alg:alm} are well-defined. Suppose that Assumption \ref{ass:growth} holds (with the parameters $\kappa>0$, $\epsilon>0$ such that \eqref{growth} holds for $T=\partial\cL$).
Then, the following statements hold:
\begin{enumerate}
\item[(a)] for any sufficiently large $k$, it holds that
\(
\label{result1}
\dist(x^k,X^*)\le
\frac{\kappa(1+\sqrt{\sigma})}{c_k}\|p^{k-1}-p^k\|,
\)
so that
\(
\label{pconv}
\lim_{k\to\infty}\dist(x^k,X^*)=0;
\)
\item[(b)]
if,
additionally, one has that
\(
\label{cond:c}
\liminf_{k\to\infty}c_k>2\kappa(\sigma+\sqrt{\sigma}),
\)
then, for any sufficiently large $k$
\(
\label{resmain}
\dist(p^{k},P^*)\le \rho_k \dist(p^{k-1},P^*)
\)
with
$$
\rho_k:
=\frac{\kappa\sqrt{1+\sigma}}
{\sqrt{c_k^2-2\kappa(\sigma+\sqrt{\sigma}){c_k}+\kappa^2(1+\sigma)}}
$$
and
\(
\label{limrho}
\limsup_{k\to\infty}\{\rho_k\}
<1.
\)
This means that the local rate of convergence for Algorithm \ref{alg:alm} is Q-linear and the modulus of the convergence rate tends to zero if the sequence $\{c_k\}$ increases to infinity.
\end{enumerate}
\end{theorem}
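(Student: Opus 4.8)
The plan is to reduce the entire analysis to one structural fact: the augmented Lagrangian update encodes an \emph{exact} subgradient of the ordinary Lagrangian $\cL$ at the new iterate, which can then be fed into the error bound of Assumption \ref{ass:growth}. First I would show that $(x^k,p^k)\in(\partial\cL)^{-1}(y^k,u^k)$, i.e. $(y^k;u^k)\in\partial\cL(x^k,\lambda^k,\mu^k)$. Differentiating the augmented Lagrangian in $x$ gives $\partial_x\cL_{c_k}(x^k,\lambda^{k-1},\mu^{k-1})=\partial f(x^k)+\nabla h(x^k)(\lambda^{k-1}+c_kh(x^k))+\nabla g(x^k)\Pi_{\cK}(\mu^{k-1}+c_kg(x^k))$; substituting the updates $\lambda^k=\lambda^{k-1}+c_kh(x^k)$ and $\mu^k=\Pi_{\cK}(\mu^{k-1}+c_kg(x^k))$ turns \eqref{conditionsub} into $y^k\in\partial_x\cL(x^k,\lambda^k,\mu^k)$. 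For the concave (dual) block, the $\lambda$-component of $u^k$ is precisely $-h(x^k)$ by the $\lambda$-update, while the projection residual $\mu^{k-1}+c_kg(x^k)-\mu^k\in N_{\cK}(\mu^k)$ shows that the $\mu$-component $\tfrac1{c_k}(\mu^{k-1}-\mu^k)$ satisfies the supergradient inequality in the definition of $\partial\cL$. Together these yield the claimed inclusion.

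Next I would reduce the error criterion \eqref{condition} to a clean bound on $\|y^k\|$. By a componentwise case split on the sign of $\mu^{k-1}_i+c_kg_i(x^k)$ one checks the identity $\|u^k\|^2=\|h(x^k)\|^2+\big\|\min\{\tfrac1{c_k}\mu^{k-1},-g(x^k)\}\big\|^2$, so the right-hand side of \eqref{condition} equals $\sigma\|u^k\|^2$; in particular $\|y^k\|\le\sqrt{\sigma}\,\|u^k\|$. Since $\|u^k\|=\tfrac1{c_k}\|p^{k-1}-p^k\|$ and $y^k,u^k\to0$ by Theorem \ref{prop:main}(b), for all large $k$ we have $\|(y^k,u^k)\|\le\epsilon$, so Assumption \ref{ass:growth} with $(\partial\cL)^{-1}(0)=X^*\times P^*$ gives $\dist((x^k,p^k),X^*\times P^*)\le\kappa\|(y^k,u^k)\|$. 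Reading off the two blocks and using $\sqrt{a^2+b^2}\le a+b$ yields part (a), namely $\dist(x^k,X^*)\le\kappa(\|y^k\|+\|u^k\|)\le\tfrac{\kappa(1+\sqrt\sigma)}{c_k}\|p^{k-1}-p^k\|$, hence $\dist(x^k,X^*)\to0$ since $u^k\to0$; the dual block gives the sharper $\dist(p^k,P^*)\le\tfrac{\kappa\sqrt{1+\sigma}}{c_k}\|p^{k-1}-p^k\|$, which I will reuse below.

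For part (b) I would combine a F\'ejer-type inequality with the two estimates just obtained. Fix $x^*=\Pi_{X^*}(x^k)$ and $p^*=\Pi_{P^*}(p^{k-1})$; both lie in the \emph{product} solution set, so $0\in\partial\cL(x^*,p^*)$. Monotonicity of $\partial\cL$ against $(y^k;u^k)\in\partial\cL(x^k,p^k)$, together with $u^k=\tfrac1{c_k}(p^{k-1}-p^k)$ and the three-point identity, gives
\[
\|p^k-p^*\|^2\le\|p^{k-1}-p^*\|^2-\|p^{k-1}-p^k\|^2+2c_k\langle y^k,x^k-x^*\rangle .
\]
The cross term is controlled by part (a): using $\|y^k\|\le\tfrac{\sqrt\sigma}{c_k}\|p^{k-1}-p^k\|$ and $\|x^k-x^*\|=\dist(x^k,X^*)\le\tfrac{\kappa(1+\sqrt\sigma)}{c_k}\|p^{k-1}-p^k\|$, Cauchy--Schwarz yields $2c_k\langle y^k,x^k-x^*\rangle\le\tfrac{2\kappa(\sigma+\sqrt\sigma)}{c_k}\|p^{k-1}-p^k\|^2$, since $\sqrt\sigma(1+\sqrt\sigma)=\sigma+\sqrt\sigma$. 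Writing $d_k:=\dist(p^k,P^*)$ and $\delta_k:=\|p^{k-1}-p^k\|$, and using $\|p^{k-1}-p^*\|=d_{k-1}$, $\|p^k-p^*\|\ge d_k$, I obtain $d_k^2\le d_{k-1}^2-\big(1-\tfrac{2\kappa(\sigma+\sqrt\sigma)}{c_k}\big)\delta_k^2$. Under \eqref{cond:c} the coefficient is positive for large $k$, so I substitute the lower bound $\delta_k\ge\tfrac{c_k}{\kappa\sqrt{1+\sigma}}\,d_k$ coming from the dual estimate of part (a); rearranging $d_k^2\big(1+\tfrac{c_k^2-2\kappa(\sigma+\sqrt\sigma)c_k}{\kappa^2(1+\sigma)}\big)\le d_{k-1}^2$ produces exactly the stated $\rho_k$. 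Finally $\rho_k<1$ is equivalent to $c_k\big(c_k-2\kappa(\sigma+\sqrt\sigma)\big)>0$, so \eqref{cond:c} gives $\limsup_{k}\rho_k<1$, and $\rho_k\to0$ as $c_k\to\infty$.

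The routine parts are the differentiation of $\cL_{c_k}$ and the componentwise identity for $\|u^k\|^2$. The main obstacle is the closing argument in part (b): one must recognize that the cross term $2c_k\langle y^k,x^k-x^*\rangle$ should be estimated through the \emph{primal} bound of part (a) (rather than any cruder device), which is precisely what turns it into a multiple of $\delta_k^2$ with coefficient $\tfrac{2\kappa(\sigma+\sqrt\sigma)}{c_k}$ and thereby matches the stated modulus; and then the recursion must be closed using the dual estimate $\delta_k\ge\tfrac{c_k}{\kappa\sqrt{1+\sigma}}d_k$. A secondary point requiring care is the exact sign bookkeeping in the concave--convex monotonicity of $\partial\cL$, which I would verify directly from the subgradient inequalities in the definition so that the inner product $\langle y^k,x^k-x^*\rangle+\langle u^k,p^k-p^*\rangle\ge0$ comes out with the correct orientation.
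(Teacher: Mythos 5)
Your proposal is correct and follows essentially the same route as the paper: the key inclusion $(y^k,u^k)\in\partial\cL(x^k,p^k)$, the bound $\|y^k\|\le\sqrt{\sigma}\,\|u^k\|$ extracted from \eqref{condition}, the error bound \eqref{growth} applied to $(y^k,u^k)$, and the F\'ejer-type inequality combined with monotonicity of $\partial\cL$ to close the recursion with exactly the stated $\rho_k$. The only difference is cosmetic --- you re-derive from scratch the subgradient inclusion and the expansion identity that the paper imports from Eckstein and Silva, and you organize the part~(b) algebra around $d_k,\delta_k$ rather than the paper's chain \eqref{ineq:pp2}--\eqref{ineq:final}, but the estimates are identical.
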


\begin{proof}
{\it (a)}
According to \eqref{def:pu} and \cite[eq. (24)]{eckstein13}, one has that
$$
\begin{array}{l}
\big(y^k,\frac{1}{c_k}(p^{k-1}-p^k)\big)
=
(y^k,u^k)\in\partial \cL(x^k,p^k).
\end{array}
$$
From \cite[eq. (38)]{eckstein13} we know that, for any $p^*\in P^*$, it holds that
\(
\label{eq:p}
\|p^k-p^*\|=\|p^{k-1}-p^*\|^2-2c_k\langle u^k,p^k-p^*\rangle-\|p^{k-1}-p^k\|^2.
\)
Since $(0,0)\in\partial\cL(x^*,p^*)$ and $\partial\cL$ is maximally monotone, it holds that for any $x^*\in X^*$ and $p^*\in P^*$,
\(
\label{ineq:px}
\langle y^k, x^k-x^*\rangle+\langle u^k, p^k-p^*\rangle\ge 0.
\)
Define the sequences $\{\bar x^k\}$ in $\cX$ and $\{\bar p^k\}$ in $\bR^{m}$ via
\[
\bar x^k:=\Pi_{X^*}(x^k)
\quad\mbox{and}\quad
\bar p^k:=\Pi_{P^*}(p^k).
\]
Combining \eqref{eq:p} and \eqref{ineq:px} together implies that, for any $p^*\in P^*$,
\(
\label{ineq:pin1}
\begin{array}{ll}
&\|p^{k-1}-p^*\|^2
-\|p^{k}-p^*\|^2\\[2mm]
&=\|p^{k}-p^{k-1}\|^2+2c_k
\langle u^k,p^k-p^*\rangle\\[2mm]
&\ge\|p^{k}-p^{k-1}\|^2-2c_k\langle x^k-\bar x^k,y^k\rangle
\\[2mm]
&\ge\|p^{k}-p^{k-1}\|^2-2c_k\|x^k-\bar x^k\|\|y^k\|.
\end{array}
\)
Since Assumption \ref{ass:growth} holds with $\kappa>0$ and $\epsilon>0$,
from \eqref{growth} we know that
$$
\dist\Big((x,p),(X^*,P^*)\Big)\le\kappa\big\|(y,u)\big\|,\quad
\forall (y,u)\in\partial \cL(x,p) \mbox{ with }
\big\|(y,u)\big\|\le\epsilon.
$$
From Proposition \ref{prop:main}(b) and \eqref{ineq:yp1} we know that $\|(y^k,u^k)\|\le\epsilon$ if $k$ is sufficiently large. Then, there exists a positive integer $k_0$ such that for any $k\ge k_0$,
\(
\label{growthbound}
\dist\left((x^{k},p^{k}),(X^*,P^*)\right)
\le
\kappa\left\|(y^k,u^k)\right\|.
\)
This, together with \eqref{def:pu}, implies that
\[
\|x^k-\bar x^k\|^2
\le
\kappa^2\|y^k\|^2+\frac{\kappa^2}{c_k^2}\|p^{k-1}-p^k\|^2.
\]
Hence, it holds that, for any $k\ge k_0$,
\(
\label{ineq:qq}
\|x^k-\bar x^k\|
\le
\kappa\|y^k\|+\frac{\kappa}{c_k}\|p^{k-1}-p^k\|.
\)
Note that \eqref{condition} can be equivalently written as
\(
\label{ineq:yp1}
{2}{c_k}
\big|\langle w^{k-1}-x^{k},y^{k}\rangle\big|
+
c_k^2\|y^k\|^2\le \sigma\|p^{k-1}-p^k\|^2,
\)
which implies that
\(
\label{ineq:yp2}
\|y^k\|\le \frac{\sqrt{\sigma}}{c_k}\|p^{k-1}-p^k\|.
\)
As a result, by combining \eqref{ineq:qq} and \eqref{ineq:yp2} together one gets
\[
\|x^k-\bar x^k\|
\le
\frac{\kappa(1+\sqrt{\sigma})}{c_k}\|p^{k-1}-p^k\|,
\]
so that \eqref{result1} is established. Moreover, since $\inf_{k\ge 0}\{c_k\}>0$,
the above inequality, together with \eqref{def:pu} and Proposition \ref{prop:main}(b), implies \eqref{pconv}.

\medskip
{\it (b)}
One can get from \eqref{ineq:qq} that
\(
\label{ineq:xp1}
\|x^k-\bar x^k\|\|y^k\|\le
\kappa\|y^k\|^2+\frac{\kappa}{c_k}\|y^k\|\|p^{k-1}-p^k\|.
\)
By taking \eqref{ineq:xp1} into \eqref{ineq:pin1}, one has that
\(
\label{ineq:main1}
\begin{array}{ll}
&\|p^{k-1}-p^*\|^2
-\|p^{k}- p^*\|^2
\\[2mm]
&\displaystyle
\ge\|p^{k}-p^{k-1}\|^2-2c_k\left(\kappa\|y^k\|^2+\frac{\kappa}{c_k}\|y^k\|\|p^{k-1}-p^k\|\right)
\\[4mm]
&\displaystyle
=\|p^{k}-p^{k-1}\|^2-2c_k
\kappa\|y^k\|^2-2\kappa\|y^k\|\|p^{k-1}-p^k\|.
\end{array}
\)
Then, by combining \eqref{ineq:yp2} and \eqref{ineq:main1} together we can get that for any $p^*\in P^*$,
\(
\label{ineq:pp2}
\begin{array}{llll}
&\displaystyle
\|p^{k-1}-p^*\|^2
-\|p^{k}-p^*\|^2
\\[3mm]
&\displaystyle
\ge\|p^{k}-p^{k-1}\|^2-2
\kappa\frac{\sigma}{c_k}\|p^{k-1}-p^k\|^2-2\kappa\frac{\sqrt{\sigma}}{c_k}\|p^{k-1}-p^k\|^2
\\[3mm]
&\displaystyle
=\left(
1-\frac{2\kappa(\sigma+\sqrt{\sigma})}{c_k}\right)
\|p^{k}-p^{k-1}\|^2.
\end{array}
\)
On the other hand, as \eqref{growthbound} holds for $k\ge k_0$, from \eqref{ineq:yp2} one has that for any $k\ge k_0$,
\(
\label{ineq:pp3}
\begin{array}{lllll}
\|p^{k}-\bar p^k\|^2
&\ds
\le
\kappa^2\|y^k\|^2+\frac{\kappa^2}{c_k^2}\|p^{k-1}-p^k\|^2
\\[3mm]
&\ds\le
\frac{\kappa^2\sigma}{c^2_k}\|p^{k-1}-p^k\|^2
+\frac{\kappa^2}{c_k^2}\|p^{k-1}-p^k\|^2
\\[3mm]
&\ds
=\frac{(1+\sigma)\kappa^2}{c_k^2}\|p^{k-1}-p^k\|^2.
\end{array}
\)
By combining \eqref{ineq:pp2} and \eqref{ineq:pp3} together, one has that for any $k\ge k_0$,
\(
\label{ineq:final}
\begin{array}{lll}
&\ds
\|p^{k-1}-\bar p^{k-1}\|^2
-\|p^{k}-\bar p^{k}\|^2
\\[3mm]
&\ds
\ge\|p^{k-1}-\bar p^{k-1}\|^2
-\|p^{k}-\bar p^{k-1}\|^2
\\[3mm]
&\ds
\ge\frac{c_k^2-{2\kappa(\sigma+\sqrt{\sigma})}{c_k}}
{\kappa^2(1+\sigma)}
\|p^{k}-\bar p^k\|^2.
\end{array}
\)
According to \eqref{cond:c}, it is easy to see that there exists a certain positive integer $k_0'$ such that for all $k\ge k'_0$, it holds that $c_k>2\kappa(\sigma+\sqrt{\sigma})$,
so that
$$
\frac{c_k^2-{2\kappa(\sigma+\sqrt{\sigma})}{c_k}}
{\kappa^2(1+\sigma)}>0.
$$
Hence, from \eqref{ineq:final} one can get that for any $k\ge\max\{k_0,k'_0\}$,
\[
\frac{\|p^{k}-\bar p^k\|^2}{\|p^{k-1}-\bar p^{k-1}\|^2}
\le
\frac{{\kappa^2(1+\sigma)}}{{\kappa^2(1+\sigma)}
+{c_k^2}-{2\kappa(\sigma+\sqrt{\sigma})}{c_k}}.
\]
This proves \eqref{resmain}.
Finally, denote
\(
\delta:=\liminf_{k\to\infty}\{c_k\}-2\kappa(\sigma+\sqrt{\sigma})>0.
\)
Since $c_k>2\kappa(\sigma+\sqrt{\sigma})$ for $k>k_0'$, one has in this case that
$$
c_k^2-2\kappa(\sigma+\sqrt{\sigma}){c_k}>\delta c_k>2\delta\kappa(\sigma+\sqrt{\sigma}),
$$
so that
$$
\rho_k=
\frac{\kappa\sqrt{1+\sigma}}
{\sqrt{c_k^2-2\kappa(\sigma+\sqrt{\sigma}){c_k}+\kappa^2(1+\sigma)}}
<
\frac{\kappa\sqrt{1+\sigma}}
{\sqrt{2\delta\kappa(\sigma+\sqrt{\sigma})+\kappa^2(1+\sigma)}}<1.
$$
This proves \eqref{limrho} and hence completes the proof of the theorem.
\end{proof}

Before concluding this paper, we should make some comments on the results established above.
\begin{remark}
Apparently,
The result on the rate of convergence established in Theorem \ref{thm:main} depends on the condition \eqref{cond:c}.
This can partially be explained by the fact that due to the subproblems of Algorithm \ref{alg:alm} are solved inexactly, one can not always guarantee the sequence $\{p^k\}$ is F\'ejer monotone to $P^*$ even if $k$ is sufficiently large.
However, this is not something ``bad'' since
\eqref{cond:c} is very likely to be satisfied.
The reason is that, if $c_k$ increases to $+\infty$, \eqref{cond:c} must be satisfied when $k$ is sufficiently large and $\rho_k$ converges to $0$, no matter what the values of $\kappa$ and $\sigma$ are.
As a result, large (but not too large to cause the numerical issues when solving the subproblems) penalty parameters are preferred when implementing Algorithm \ref{alg:alm} as they can both bring better modulus of the convergence rate and enhance the likelihood that \eqref{cond:c} holds.
\end{remark}

\begin{remark}
It was established in {\rm\cite{eckstein13}} that any accumulation point of the primal sequence $\{x^k\}$ is a solution to problem \eqref{prob}, but this sequence is not guaranteed to be bounded.
As a result, the convergence properties of this sequence has not been explored.
Moreover,
even the sequence $\{w^k\}$ is bounded, it is just a sequence of auxiliary variables so that the importance of its convergence properties is not apparent.
Hence, the convergence result of the sequence $\{x^k\}$ established in Theorem \ref{thm:main} (a), to some extent, makes real progress.
\end{remark}

\begin{remark}
Recently, in \cite[Proposition 1]{cui}, the authors have improved the convergence rate analysis of the PPA \eqref{algppa} for solving \eqref{inclu} with the error tolerance criteria in \cite{rockafellar76} by relaxing the upper Lipschitz continuous of $T^{-1}$ at the origin to
the condition that $T^{-1}$ is calm\footnote{
Such a calmness assumption is weaker than the upper Lipschitz continuous property of $T^{-1}$, and one may refer to \cite[Section 2.1]{cui} for details.} at the origin for $z^{\infty}$ with a certain positive modulus, where $z^{\infty}$ is the limit of the sequence ${z^k}$ generated by the PPA (which always exists due to the global convergence property of the PPA).
As can be seen from Proposition \ref{prop:main} and Theorem \ref{theo:main}, the primal sequence generated by Algorithm \ref{alg:alm} is not guaranteed to be convergent in general. Hence, it is impracticable to impose similar calmness assumptions on $(\partial \cL)^{-1}$ instead of using Assumption \ref{ass:growth}.

\end{remark}

\section{Conclusions}
\label{sec:conclusion}
In this paper, we have analyzed the fast Q-linear convergence rate of the inexact augmented Lagrangian method with a practical relative error criterion, under a mild local error bound condition.
The results established in this paper imply that if the penalty parameter increases to infinity, the modulus of the convergence rate would tend to zero, which constitutes an asymptotically superlinear convergence rate.
Besides, without introducing any additional condition, the distance from the primal sequence to the solution set of the primal problem is guaranteed to vanish.
The results here can serve as the guideline for choosing the penalty parameter when implementing the inexact augmented Lagrangian method of \cite{eckstein13}.

\section*{Acknowledgments}
The authors would like to thank Prof. Defeng Sun at the Hong Kong Polytechnic University for bringing the paper of \cite{eckstein13} to their attentions.

\end{document}